\g@addto@macro\th@plain{\thm@headpunct{}}
\newtheorem{thm}{Theorem}[section]
\newtheorem{lem}[thm]{Lemma}
\newtheorem{prop}[thm]{Proposition}
\newtheorem{mainthm}[thm]{Main Theorem}
\theoremstyle{definition}
\numberwithin{equation}{section}
\newcommand{\xx}{ {\textbf x} }
\newcommand{\ab}{ {\textbf a} }
\newcommand{\cc}{ {\textbf c} }
\newcommand{\yy}{ {\textbf y} }
\newcommand{\ttt}{ {\textbf t} }
\newcommand{\zz}{ {\textbf z} }
\newcommand{\ee}{ {\textbf e} }
\newcommand{\ub}{ {\textbf u} }
\newcommand{\vb}{ {\textbf v} }
\newcommand{\pb}{ {\textbf p} }
\newcommand{\pbo}{ {\textbf p}^\bot }
\newcommand{\qb}{ {\textbf q} }
\newcommand{\qbo}{ {\textbf q}^\bot }
\newcommand{\VV}{ \mathcal{V} }
\newcommand{\RR}{\mathbb{R}}
\newcommand{\KK}{\mathbb{K}}
\newcommand{\LL}{\mathbb{L}}
\newcommand{\PP}{\mathbb{P}}
\newcommand{\En}{\mathbb{E}}
\newcommand{\DD}{\mathcal{D}}
\newcommand{\tr}{\mathrm{tr}\,}
\newcommand{\Trace}{\mathrm{Trace}\,}
\newcommand{\DDet}{\mathrm{Det}}
\providecommand{\scalar}[1]{\left\langle#1\right\rangle}
\title{The Lukacs-Olkin-Rubin theorem on symmetric cones through Gleason's theorem}
\author[B. Ko\l{}odziejek]{Bartosz Ko\l{}odziejek}
\address{Faculty of Mathematics and Information Science\\Warsaw University of Technology\\Pl. Politechniki 1\\00-661 Warszawa, Poland}
\email{kolodziejekb@mini.pw.edu.pl}
\subjclass[2010]{Primary 62H05.}
\keywords{Wishart distribution, symmetric cones, independence, Gleason's theorem, functional equations}
\begin{document}

\begin{abstract}
We prove the Lukacs characterization of the Wishart distribution on non-octonion symmetric cones of rank greater than $2$. We weaken the smoothness assumptions in the version of the Lukacs theorem of [Bobecka–-Weso{\l}owski, Studia Math. 152 (2002),
147–-160]. The main tool is a new solution of the Olkin-Baker functional equation on symmetric cones, under the assumption of continuity of respective functions. It was possible thanks to the use of Gleason's theorem.
\end{abstract}
\maketitle
\section{Introduction}
The Lukacs theorem (\cite{Lukacs1955}) is one of the most celebrated characterizations of probability distributions. It states that \textit{if $X$ and $Y$ are independent, positive, non-degenerate random variables such that their sum and quotient are also independent then $X$ and $Y$ have gamma distributions with the same scale parameter}. This theorem has many generalizations. The most important in the multivariate setting were given in \cite{OlRu1962} and \cite{CaLe1996}, where the authors extended characterization to matrix and symmetric cones variate distributions, respectively. There is no unique way of defining the quotient of elements of the cone and in these papers the authors have considered very general form $U=[w(X+Y)]^{-1}X [w^T(X+Y)]^{-1}$, where $w$ is the so called division algorithm, that is, $w(\ab)w^T(\ab)=\ab$ for any element $\ab$ of the cone. The drawback of their extension was that the additional strong assumptions of invariance of the distribution of the ``quotient'' under a group of automorphisms was imposed. To avoid this assumption Bobecka and Weso\l{}owski \cite{BW2002} have developed another approach based on densities of $X$ and $Y$. Assuming existence of strictly positive, twice differentiable densities on the cone of positive definite symmetric matrices they proved a characterization of Wishart distribution for $U=(X+Y)^{-1/2}X(X+Y)^{-1/2}$, where $\ab^{1/2}$ denotes the unique symmetric root of a positive definite matrix $\ab$. Among all division algorithms the choice $w(\ab)=\ab^{1/2}$ is possibly the most natural one. 

Exploiting the same approach, with the same technical assumptions on densities it was proven in \cite{HaLaZi2008} that the independence of $X+Y$ and the quotient defined through the Cholesky decomposition, i.e. $U=[W(X+Y)]^{-1}X[W^T(X+Y)]^{-1}$, where $W(X+Y)$ is an upper triangular matrix in the decomposition of $X+Y$, characterizes a wider family of distributions called Riesz-Wishart. This fact shows that the invariance property assumed in \cite{OlRu1962} and \cite{CaLe1996} is not of technical nature only. Analogous results for homogeneous cones were obtained in \cite{Bout2009} and \cite{Hassairi2}.

In this paper we attempt to weaken the smoothness assumptions in the \cite{BW2002} version of the Lukacs theorem. OOur approach is based on the new solution of the Olkin–Baker equation and is very different from theirs. We succeeded in proving the characterization theorem assuming the continuity of densities only. This was possible thanks to an approach developed by Moln{\'a}r \cite{Molnar2006}. He proposed a method of solving functional equation for functions of matrix arguments consisting of two steps. First, an equation is solved for mutually orthogonal idempotents and afterwards a beautiful connection with Gleason's theorem is exploited.

This paper is organized as follows. We start in the next section with basic definitions and theorems regarding analysis on symmetric cones and a short introduction to Gleason's theorem. The statement and proof of the main result are given in Section \ref{secLUK}. Section \ref{secFUN} is devoted to solving two functional equations and is the technical core of the paper. It should be stressed, however, that we limit our considerations to symmetric cones of rank strictly greater than $2$ and exclude the octonion cone, due to the use of Gleason's theorem. 

\section{Preliminaries}
In this section we give a short introduction to the theory of symmetric cones and quantum logic. For further details refer, respectively, to \cite{FaKo1994} and \cite{Dvu1993}. 

A \textit{Euclidean Jordan algebra} is a Euclidean space $\En$ (endowed with scalar product denoted $\scalar{\xx,\yy}$) equipped with a bilinear mapping (product)
\begin{align*}
\En\times\En \ni \left(\xx,\yy\right)\mapsto \xx\yy\in\En
\end{align*}
and a neutral element $\ee$ in $\En$ such that for all $\xx$, $\yy$, $\zz$ in $\En$:
\begin{itemize}
	\item $\xx\yy=\yy\xx$, 
	\item $\xx(\xx^2\yy)=\xx^2(\xx\yy)$,
	\item $\xx\ee=\xx$,
	\item $\scalar{\xx,\yy\zz}=\scalar{\xx\yy,\zz}$.
\end{itemize}
For $\xx\in\En$ let $\LL(\xx)\colon \En\to\En$ be linear map defined by
\begin{align*}
\LL(\xx)\yy=\xx\yy,
\end{align*}
and define 
\begin{align*}
\PP(\xx)=2\LL^2(\xx)-\LL\left(\xx^2\right).
\end{align*} 
The map $\PP\colon \En\mapsto End(\En)$ is called the \emph{quadratic representation} of $\En$.

An element $\xx$ is said to be \emph{invertible} if there exists an element $\yy$ in $\En$ such that $\LL(\xx)\yy=\ee$. Then $\yy$ is called the \emph{inverse of} $\xx$ and is denoted by $\yy=\xx^{-1}$. Note that the inverse of $\xx$ is unique. It can be shown that $\xx$ is invertible if and only if $\PP(\xx)$ is invertible and in this case $\left(\PP(\xx)\right)^{-1} =\PP\left(\xx^{-1}\right)$.

Euclidean Jordan algebra $\En$ is said to be \emph{simple} if it is not a \mbox{Cartesian} product of two Euclidean Jordan algebras of positive dimensions. Up to linear isomorphism there are only five kinds of Euclidean simple Jordan algebras. Let $\mathbb{K}$ denote either the real numbers $\RR$, the complex ones $\mathbb{C}$, quaternions $\mathbb{H}$ or the octonions $\mathbb{O}$, and write $S_r(\mathbb{K})$ for the space of $r\times r$ Hermitian matrices valued in $\mathbb{K}$, endowed with the Euclidean structure $\scalar{\xx,\yy}=\Trace(\xx\cdot\bar{\yy})$ and with the Jordan product
\begin{align}\label{defL}
\xx\yy=\tfrac{1}{2}(\xx\cdot\yy+\yy\cdot\xx),
\end{align}
where $\xx\cdot\yy$ denotes the ordinary product of matrices and $\bar{\yy}$ is the conjugate of $\yy$. Then $S_r(\RR)$, $r\geq 1$, $S_r(\mathbb{C})$, $r\geq 2$, $S_r(\mathbb{H})$, $r\geq 2$, and the exceptional $S_3(\mathbb{O})$ are the first four kinds of Euclidean simple Jordan algebras. Note that in this case 
\begin{align}\label{defP}
\PP(\yy)\xx=\yy\cdot\xx\cdot\yy.
\end{align}
The fifth kind is the Euclidean space $\RR^{n+1}$, $n\geq 2$, with Jordan product
\begin{align}\label{scL}\begin{split}
\left(x_0,x_1,\dots, x_n\right)\left(y_0,y_1,\dots,y_n\right) =\left(\sum_{i=0}^n x_i y_i,x_0y_1+y_0x_1,\dots,x_0y_n+y_0x_n\right).
\end{split}
\end{align}

To each Euclidean simple Jordan algebra one can attach the set of Jordan squares
\begin{align*}
\bar{\VV}=\left\{\xx\in\En\colon\mbox{ there exists }\yy\mbox{ in }\En\mbox{ such that }\xx=\yy^2 \right\}.
\end{align*}
The interior $\VV$ is a symmetric cone.
Moreover $\VV$ is \emph{irreducible}, i.e. it is not the Cartesian product of two convex cones. One can prove that an open convex cone is symmetric and irreducible if and only if it is the cone $\VV$ of some Euclidean simple Jordan algebra. Each simple Jordan algebra corresponds to a symmetric cone, hence there exist up to linear isomorphism also only five kinds of symmetric cones. The cone corresponding to the Euclidean Jordan algebra $\RR^{n+1}$ equipped with Jordan product \eqref{scL} is called the Lorentz cone. 

We will now introduce a very useful decomposition in $\En$, called \emph{spectral decomposition}. An element $\cc\in\En$ is said to be a \emph{primitive idempotent} if $\cc\cc=\cc\neq 0$ and if $\cc$ is not a sum of two non-null idempotents. A \emph{complete system of primitive orthogonal idempotents} is a set $\left\{\cc_1,\dots,\cc_r\right\}$ such that
\begin{align*}
\sum_{i=1}^r \cc_i=\ee\quad\mbox{and}\quad\cc_i\cc_j=\delta_{ij}\cc_i\quad\mbox{for } 1\leq i<j\leq r.
\end{align*}
The size $r$ of such system is a constant called the \emph{rank} of $\En$. Any element $\xx$ of a Euclidean simple Jordan algebra can be written as $\xx=\sum_{i=1}^r\lambda_i\cc_i$ for some complete $\left\{\cc_1,\dots,\cc_r\right\}$ system of primitive orthogonal idempotents. The real numbers $\lambda_i$, $i=1,\dots,r$ are the \emph{eigenvalues} of $\xx$. One can then define \emph{trace} and \emph{determinant} of $\xx$ by, respectively, \mbox{$\tr(\xx)=\sum_{i=1}^r\lambda_i$} and \mbox{$\det(\xx)=\prod_{i=1}^r\lambda_i$}. An element $\xx\in\En$ belongs to $\VV$ if and only if all its eigenvalues are strictly positive. 
The \emph{logarithm} $\ab$ of a given element $\textbf{b}\in\VV$ is defined by $\exp(\ab)=\textbf{b}$ and we denote it by $\ab=\log\textbf{b}$. For $\VV\ni\xx=\sum_{i=1}^r \lambda_i\cc_i$ we have $\log\xx=\sum_{i=1}^r\cc_i\log\lambda_i$. 
 
The rank $r$ and $\dim\VV$ are connected through relation
\begin{align*}
\dim\VV=r+\frac{d r(r-1)}{2},
\end{align*}
where $d$ is an integer called the \emph{Peirce constant}. 

The Wishart distribution $\gamma_{p,\ab}$ in $\bar{\VV}$ is defined for any $\ab\in\VV$ and any $p$ in the set
\begin{align*}
\{0,d/2,d,\ldots,d(r-1)/2\}\cup(d(r-1)/2,\infty)
\end{align*}
by its Laplace transform
\begin{align*}
\int_{\bar{\VV}} \exp(-\scalar{\ttt,\yy})\gamma_{p,\ab}(d\yy)=\left(\det\left(\ee+\ttt \ab^{-1}\right)\right)^{-p}
\end{align*}
for any $\ttt+\ab\in\VV$. If $p>\dim\VV/r-1$ then $\gamma_{p,\ab}$ is absolutely continuous with respect to the Lebesgue measure and has density
\begin{align*}
\gamma_{p,\ab}(d\yy)=\frac{(\det(\ab))^p}{\Gamma_\VV(p)} (\det(\yy))^{p-\dim\VV/r}\exp\left(-\scalar{\ab,\yy}\right)I_\VV(\yy)\,d\yy,
\end{align*}
where $\Gamma_\VV$ is the Gamma function of symmetric cone $\VV$ (see, for instance, \cite{FaKo1994}, p. 122).

One of the crucial elements in the proof of the main theorem is the use of Gleason's theorem which originated from quantum logic theory. Now we introduce necessary basics.

Let $L(H)$ be the set of all closed subspaces of a real, complex or left quaternionic Hilbert space $H$ of finite dimension. We define \emph{charge} to be a mapping $m\colon L(H)\to\RR\cup\left\{-\infty\right\}\cup\left\{\infty\right\}$ such that
\begin{align*}
m(0)&=0, \\
m\left(\bigcup_{t\in T} a_t\right)&=\sum_{t\in T} m(a_t),
\end{align*}
for any system of mutually orthogonal subspaces $\left\{a_t\right\}_{t\in T}$ from $L(H)$ with any finite index set $T$.

We denote by $P_1(H)\subset L(H)$ the set of all one-dimensional subspaces of $H$. We say that charge $m$ is \emph{$P_1(H)$-semibounded} if 
\begin{align*}
\inf\left\{m(M)\colon M\in P_1(H)\right\}>-\infty.
\end{align*}
\begin{thm}[Gleason's theorem] \label{gT}
For every $P_1(H)$-semibounded charge $m$ on $L(H)$, $3\leq\dim H<\infty$, there exists a unique Hermitian operator $\ttt$ on $H$ such that 
\begin{align*}
m(M)=\Trace(\ttt\cdot\pb_M),\quad M\in L(H),
\end{align*}
where $\pb_M$ is the orthogonal projection onto $M$ and $\mathrm{Trace}$ is the trace of a linear operator on $H$.
\end{thm}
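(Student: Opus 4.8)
The plan is to reduce the statement to Gleason's classical result on frame functions and then run his harmonic-analysis argument, while taking care of the signed, possibly infinite-valued charge setting.

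First I would reduce $m$ to a function on the unit sphere. Since every $M\in L(H)$ is an orthogonal sum of one-dimensional subspaces and $m$ is finitely additive, $m$ is completely determined by its values on $P_1(H)$. Define $f$ on the unit sphere $S(H)$ by $f(x)=m(\KK x)$, where $\KK x$ is the line spanned by $x$; this is well defined because $m$ depends only on the subspace, not on the representing unit vector. For any orthonormal basis $\{e_1,\dots,e_n\}$ of $H$ additivity gives $\sum_{i} f(e_i)=m(H)=:W$, the same value for every basis, so $f$ is a \emph{frame function} of weight $W$, and $P_1(H)$-semiboundedness reads $f\geq c$ on $S(H)$ for some finite $c$. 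The first point, specific to this charge formulation, is to show that $W$ is finite and $f$ bounded: once $W<\infty$, writing $f(e_i)=W-\sum_{j\neq i}f(e_j)\leq W-(n-1)c$ and completing any unit vector to a basis yields $c\leq f\leq W-(n-1)c$ everywhere. Ruling out $W=+\infty$ uses that, by $\dim H\geq 3$, infinite values cannot be distributed consistently over all frames subject to the lower bound $c$.

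The analytic heart is the three-dimensional real case: every bounded frame function on the unit sphere $S^2$ of a $3$-dimensional real Hilbert space has the form $x\mapsto\scalar{\ttt x,x}$ for a symmetric operator $\ttt$. Here I would first prove that a bounded frame function on $S^2$ is \emph{continuous}, which is Gleason's intricate geometric lemma controlling the oscillation of $f$ through nearby orthonormal triples. Granting continuity, expand $f$ in spherical harmonics, $f=\sum_{\ell\geq 0}f_\ell$. Because $f(x)=f(-x)$, all odd components vanish; averaging the frame relation over rotations, together with the fact that each homogeneous component is again a frame function, forces $f_\ell\equiv 0$ for $\ell\geq 4$ and pins down $f_0$ and $f_2$ by the weight. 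Hence $f$ is the restriction of a quadratic form, i.e. $f(x)=\scalar{\ttt x,x}$. This is exactly where $\dim H\geq 3$ is indispensable, the two-dimensional sphere admitting non-quadratic frame functions.

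It remains to globalize and to treat $\KK\in\{\mathbb{C},\mathbb{H}\}$. For a real $H$ with $\dim H=n\geq 3$ I would restrict $f$ to each $3$-dimensional subspace, obtaining a symmetric operator there, and then check on overlapping subspaces that these are the restrictions of a single global symmetric $\ttt$ with $f(x)=\scalar{\ttt x,x}$. For complex or quaternionic $H$ I would regard it as a real Hilbert space of real dimension $\geq 3$, produce a real-symmetric $\ttt$ by the previous step, and then use that $f$ is invariant under multiplication by unit scalars to verify that $\ttt$ commutes with the field structure, hence is Hermitian in the original sense. Summing over an orthonormal basis $\{u_i\}$ of any $M\in L(H)$ then gives $m(M)=\sum_i f(u_i)=\sum_i\scalar{\ttt u_i,u_i}=\Trace(\ttt\cdot\pb_M)$. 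Uniqueness is immediate: if $\Trace(\ttt_1\cdot\pb_M)=\Trace(\ttt_2\cdot\pb_M)$ for every line $M=\KK x$, then $\scalar{(\ttt_1-\ttt_2)x,x}=0$ for all $x$, so $\ttt_1=\ttt_2$. I expect the continuity step in the three-dimensional real case to be the principal obstacle, with the preliminary reduction from semiboundedness to boundedness a secondary one specific to the charge setting.
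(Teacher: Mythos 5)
First, a point of reference: the paper does not prove Theorem~\ref{gT} at all. It is imported as a known result, with the proof delegated to \cite{Dvu1993} for $\KK=\RR,\mathbb{C}$ and to \cite{Varad1985} for $\KK=\mathbb{H}$. So there is no in-paper argument to compare yours against; your text is an outline of Gleason's classical proof and must be judged on its own.

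Two steps of that outline are genuinely gapped. The more serious one is your reduction of the complex and quaternionic cases by ``regarding $H$ as a real Hilbert space.'' The function $g(x)=m(\KK x)$ on the unit sphere of the realification $H_\RR$ is \emph{not} a priori a frame function there: for a real-orthonormal basis $\{x_j\}$ of $H_\RR$ the lines $\KK x_j$ need not be mutually orthogonal in $H$ (in $\mathbb{C}^2$, the vectors $e_1$ and $\tfrac{1}{\sqrt2}(ie_1+e_2)$ are real-orthogonal but not complex-orthogonal), so the additivity of $m$ gives no control over $\sum_j g(x_j)$ and you cannot invoke the real case on $H_\RR$. Gleason's actual reduction restricts $f$ to the completely real subspaces spanned by vectors with real coordinates relative to a fixed orthonormal basis --- real Hilbert spaces of the same dimension $n\geq 3$ on which the restriction genuinely is a frame function --- and then patches the resulting quadratic forms together using invariance under unit scalars; the quaternionic case in \cite{Varad1985} needs its own version of this. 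The second gap is the passage from semiboundedness to boundedness and finiteness: the claim that ``infinite values cannot be distributed consistently over all frames'' is an assertion, not an argument (observe that $m\equiv+\infty$ on all nonzero subspaces is finitely additive and $P_1(H)$-semibounded, so excluding the value $+\infty$ requires a real input; in \cite{Dvu1993} the statement that a semibounded charge is bounded is a separate, nontrivial theorem). Beyond these, the two analytic pillars --- continuity of bounded frame functions on $S^2$ and the spherical-harmonics computation --- are named but not carried out; you acknowledge this, but as written your text is a roadmap to the literature rather than a proof, which is in fact exactly how the paper itself treats this statement.
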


For a fuller description of above mentioned theory we refer to \cite{Dvu1993} (real and complex case) and \cite{Varad1985} (quaternionic case).

\section{Olkin--Baker Functional Equation}\label{secFUN}

In this section we solve the functional equation derived from the condition of independence of corresponding random variables. For details see Section \ref{secLUK}. The following theorem is of independent interest in the functional equations theory and is the technical core of the paper.

\begin{mainthm}[Olkin--Baker equation on symmetric cones]\label{T3}
Let $a$, $b$, $c$ and $d$ be real continuous functions on a non octonion symmetric cone $\VV$ of rank $r\neq 2$. Assume 
\begin{align}\label{czt}
a(\xx)+b(\yy)=c(\xx+\yy)+d\left(\PP\left((\xx+\yy)^{-1/2}\right)\xx\right),\qquad (\xx,\yy)\in \VV^2.
\end{align}
Then there exist constants $k_1,k_2\in\RR$, $\Lambda\in\En$, $C_i\in\RR$, $i\in\{1,2,3,4\}$ such that 
\begin{align}\label{gwiazdka}\begin{split}
a(\xx)&=\scalar{\Lambda,\xx}+k_1\log\det(\xx)+C_1,\\
b(\xx)&=\scalar{\Lambda,\xx}+k_2\log\det(\xx)+C_2,\\
c(\xx)&=\scalar{\Lambda,\xx}+(k_1+k_2)\log\det(\xx)+C_3,\\
d(\ub)&=k_1\log\det(\ub)+k_2\log\det(\ee-\ub)+C_4,
\end{split}\end{align}
for all $\xx\in\VV$ and $\ub\in\DD:=\left\{\zz\in\VV\colon\ee-\zz\in\VV \right\}$.
\end{mainthm}
The problem of solving
\begin{align}\label{OB}
f(x)g(y)=p(x+y)q(x/y),\quad(x,y)\in(0,\infty)^2
\end{align}
for unknown positive functions $f$, $g$, $p$ and $q$ was first posed in \cite{Olkin3}. Its general solution was given in \cite{Baker1976}, and later analyzed in \cite{Lajko1979} using a different approach. Recently, in \cite{Mesz2010} and \cite{LajMes12} the equation \eqref{OB} was solved assuming that it is satisfied almost everywhere on $(0,\infty)^2$ for measurable functions which are non-negative on its domain or positive on some sets of positive Lebesgue measure, respectively. Finally, a new derivation of solution to \eqref{OB}, when the equation holds almost everywhere on $(0,\infty)^2$ and no regularity assumptions on unknown positive functions are imposed, was given in \cite{WES4}. The equation \eqref{czt} is an adaptation of \eqref{OB} (after taking logarithm) to the symmetric cone case.

The proof is divided into two propositions and two lemmas.  In Proposition \ref{T1} we show that each of the functions $a$, $b$ and $c$ is a sum of the function in the expected form, as in \eqref{gwiazdka}, and a homogeneous one, satisfying the same functional equation \eqref{czt}. In Proposition \ref{T2} we show that these homogeneous functions are actually constant. 

We start with a simple lemma, which will be useful in the proof of the first proposition.
\begin{lem}{(Pexider functional equation on symmetric cones)}\label{lemma1}
Let $a$, $b$ and $c$ be measurable functions on a symmetric cone $\VV$ satisfying 
\begin{align}\label{lempex}
a(\xx)+b(\yy)=c(\xx+\yy),\qquad \forall\,(\xx,\yy)\in \VV^2.
\end{align}
Then there exist constants $\alpha, \beta\in\RR$ and $\lambda\in\En$ such that for all $\xx\in\VV$,
\begin{align}\label{abcdef}\begin{split}
a(\xx)&=\scalar{\lambda,\xx}+\alpha, \\
b(\xx)&=\scalar{\lambda,\xx}+\beta, \\
c(\xx)&=\scalar{\lambda,\xx}+\alpha +\beta.
\end{split}\end{align} 
\end{lem}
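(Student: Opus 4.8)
The plan is to reduce the Pexider equation \eqref{lempex} to an additive (Cauchy) equation and then invoke continuity/measurability to get linearity. First I would kill the constants by evaluating at a fixed basepoint: fix $\xx_0\in\VV$ and note that since $\VV$ is an open convex cone, for any $\xx,\yy\in\VV$ all the relevant sums stay in $\VV$. Define $\tilde a(\xx)=a(\xx)-a(\xx_0)$ and similarly shifted versions of $b,c$, or more cleanly introduce $F(\xx):=c(\xx)-a(\xx)-b(\yy_0)$ type auxiliary functions; the goal is to show $c(\xx+\yy)=c(\xx)+c(\yy)-C$ for a constant $C$, i.e. that $c$ (up to an additive constant) is additive on $\VV$.

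Concretely, I would first show all three functions agree up to additive constants with a single additive function. Write \eqref{lempex} at $(\xx,\yy)$ and compare with its values when one argument is replaced by the basepoint. Subtracting the relation $a(\xx)+b(\yy_0)=c(\xx+\yy_0)$ from $a(\xx)+b(\yy)=c(\xx+\yy)$ gives $b(\yy)-b(\yy_0)=c(\xx+\yy)-c(\xx+\yy_0)$, whose right side must then be independent of $\xx$. Iterating this kind of substitution isolates a function $g$ on $\VV$ satisfying the genuine Cauchy equation
\begin{align*}
g(\xx+\yy)=g(\xx)+g(\yy),\qquad (\xx,\yy)\in\VV^2,
\end{align*}
with $a,b,c$ each equal to $g$ plus an additive constant (the constants $\alpha,\beta,\alpha+\beta$ then being forced by plugging back into \eqref{lempex}).

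The remaining task is to solve this additive equation on the cone. Since $\VV$ is an open convex cone in the finite-dimensional Euclidean space $\En$, additivity on $\VV$ extends to additivity on all of $\En$: any $\xx\in\En$ can be written as a difference of two elements of $\VV$ (because $\VV-\VV=\En$ for an open cone), so one defines $\bar g(\xx):=g(\xx+\vb)-g(\vb)$ for any $\vb\in\VV$ with $\xx+\vb\in\VV$ and checks this is well-defined and additive on $\En$. A measurable (here even continuous, by hypothesis) additive function on a finite-dimensional real vector space is automatically $\RR$-linear, hence of the form $\bar g(\xx)=\scalar{\lambda,\xx}$ for a unique $\lambda\in\En$ via the Riesz representation with respect to the given scalar product. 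Reading off the constants gives exactly \eqref{abcdef}.

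The only genuine obstacle is the passage from additivity-on-a-cone to global linearity: one must be careful that the extension $\bar g$ to $\En$ is well-defined and additive, and that measurability of $g$ on $\VV$ suffices to conclude linearity rather than merely $\QQ$-linearity (which on its own would admit pathological Hamel-basis solutions). This is handled by the classical fact that a measurable additive function on $\RR^n$ is linear; the continuity assumed in the Main Theorem makes this step immediate, while measurability alone still suffices here. Everything else is bookkeeping with substitutions exploiting that $\VV$ is closed under addition.
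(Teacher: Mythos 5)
Your proposal is correct and follows essentially the same route as the paper's proof: reduce the Pexider equation to a Cauchy (additive) equation on $\VV$, extend the additive function to all of $\En$ (the paper uses the explicit shift $\xx\mapsto\xx+t_\xx\ee$, which plays the same role as your observation that $\VV-\VV=\En$), and conclude linearity from measurability of an additive function on a finite-dimensional space. The only cosmetic difference is that you obtain additivity directly by basepoint subtraction, whereas the paper first shows $a-b$ is constant, derives Jensen's equation for $c$, and cites Ger--Kominek to pass from Jensen to additive-plus-constant; your concluding ``bookkeeping'' (propagating the formulas from translates of $\VV$ back to all of $\VV$ via the original equation) is a genuine but routine step that the paper likewise leaves implicit.
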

\begin{proof}
First we will show $c$ satisfies Jensen functional equation. For $\xx, \yy\in\VV$ we have
\begin{align*}
a(\xx)+b(\yy)=c(\xx+\yy)=a(\yy)+b(\xx).
\end{align*}
Hence, $a(\xx)-b(\xx)=a(\yy)-b(\yy)=const:=A_1$. Plugging $a(\xx)=b(\xx)+A_1$ into \eqref{lempex} for $\yy=\xx\in\VV$ we get $2b(\xx)=c(2\xx)-A_1$ and so
\begin{align*}
c(2\xx)+c(2\yy)=2c(\xx+\yy),
\end{align*}
i.e. $c$ is Jensen on symmetric cone $\VV$. Following the standard approach (see, for instance, \cite{GerKom2011}) we infer that there exists a constant $A_2$ such that $c(\xx)=f(\xx)+A_2$, where $f$ is additive on $\VV$, i.e.
\begin{align*}
f(\xx)+f(\yy)=f(\xx+\yy),\quad\xx,\yy\in\VV.
\end{align*}
We define an extension $\bar{f}$ of $f$ to the whole $\En$ as follows:
\begin{align*}
\bar{f}(\xx)=\begin{cases}
f(\xx) & \mbox{ for }\xx\in\VV, \\
f(\xx+t_\xx\ee)-f(t_\xx\ee)& \mbox{ for }\xx\notin\VV,
\end{cases}
\end{align*}
where $t_\xx=1 - \min_i \lambda_i$, $\lambda_i$ being the $i$-th eigenvalue of $\xx$. Observe that for $\xx\notin\VV$ element $\xx+t_\xx\ee$ belongs to $\VV$.  Indeed, $\xx+t_\xx\ee$ has all its eigenvalues positive, actually greater than $1$. Note that if $\xx\notin\VV$ then at least one of its eigenvalues is non-positive.
Since  $t_\xx>0$ we have $t_\xx\ee\in\VV$.

It can be easily verified that $\bar{f}$ is additive on $\En$. Take for example $\xx\in\VV$, $\yy\notin\VV$ and suppose $\xx+\yy\notin\VV$. Then we have
\begin{align*}
L & =\bar{f}(\xx)+\bar{f}(\yy)=f(\xx)+f(\yy+t_\yy\ee)-f(t_\yy\ee)\\
R & =\bar{f}(\xx+\yy)=f(\xx+\yy+t_{\xx+\yy}\ee)-f(t_{\xx+\yy}\ee)
\end{align*}
Since 
\begin{align*}
f(\xx)+f(\yy+t_\yy\ee)+f(t_{\xx+\yy}\ee) = f(\xx+\yy+t_\yy\ee+t_{\xx+\yy}\ee) = f(\xx+\yy+t_{\xx+\yy}\ee)+f(t_\yy\ee)
\end{align*}
we see that $L=R$. Hence, we know that there exists a constant $\lambda\in\En$ such that
\begin{align*}\bar{f}(\xx)=\scalar{\lambda,\xx}\end{align*}
for all $\xx\in\En$. Since $f(\xx)=\bar{f}(\xx)$ on $\VV$ the proof is complete.
One gets the form of \eqref{abcdef} for $A_1=\alpha-\beta$ and $A_2=\alpha+\beta$.
\end{proof}

\begin{prop} \label{T1}
Let $a$, $b$, $c$ and $d$ be real continuous functions on a symmetric cone $\VV$ of rank $r$. Assume 
\begin{align}\label{main}
a(\xx)+b(\yy)=c(\xx+\yy)+d\left(\PP\left((\xx+\yy)^{-1/2}\right)\xx\right),\qquad (\xx,\yy)\in \VV^2.
\end{align}
Then there exist constants $k_1,k_2\in\RR$ and $\Lambda\in\En$ such that for all $\xx\in\VV$ and $\ub\in\DD$,
\begin{align*}
a(\xx)&=\scalar{\Lambda,\xx}+k_1\log\det(\xx)+e(\xx),\\
b(\xx)&=\scalar{\Lambda,\xx}+k_2\log\det(\xx)+f(\xx),\\
c(\xx)&=\scalar{\Lambda,\xx}+(k_1+k_2)\log\det(\xx)+g(\xx),\\
d(\ub)&=k_1\log\det(\ub)+k_2\log\det(\ee-\ub)+h(\ub),
\end{align*}
where $e$, $f$, $g$ and $h$ are continuous functions satisfying 
\begin{align}
e(\xx)+f(\yy)=g(\xx+\yy)+h\left(\PP\left((\xx+\yy)^{-1/2}\right)\xx\right) \label{qwerty} \\
\intertext{for $(\xx,\yy)\in\VV^2$ and} 
e(s\xx)=e(\xx),\quad f(s\xx)=f(\xx),\quad g(s\xx)=g(\xx)  \label{prop}
\end{align}
for any $s\in(0,\infty)$ and $\xx\in\VV$.
\end{prop}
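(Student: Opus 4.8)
The key observation is that the ``quotient'' $\ub=\PP((\xx+\yy)^{-1/2})\xx$ is invariant under the simultaneous scaling $(\xx,\yy)\mapsto(s\xx,s\yy)$ for $s\in(0,\infty)$: since $\PP$ is quadratic, $\PP((s\xx+s\yy)^{-1/2})(s\xx)=s^{-1}\cdot s\,\PP((\xx+\yy)^{-1/2})\xx=\ub$. Hence, replacing $\xx,\yy$ by $s\xx,s\yy$ in \eqref{main} and subtracting the original equation cancels the $d$-term. Writing $A_s(\xx)=a(s\xx)-a(\xx)$, $B_s(\yy)=b(s\yy)-b(\yy)$ and $C_s(\zz)=c(s\zz)-c(\zz)$, this produces, for each fixed $s$, the Pexider equation $A_s(\xx)+B_s(\yy)=C_s(\xx+\yy)$ on $\VV^2$. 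These functions are continuous, hence measurable, so Lemma \ref{lemma1} yields $\lambda(s)\in\En$ and $\alpha(s),\beta(s)\in\RR$ with $A_s(\xx)=\scalar{\lambda(s),\xx}+\alpha(s)$, $B_s(\xx)=\scalar{\lambda(s),\xx}+\beta(s)$ and $C_s(\xx)=\scalar{\lambda(s),\xx}+\alpha(s)+\beta(s)$. It is crucial that a single $\lambda(s)$ governs all three, since this is what will force a common $\Lambda$.

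Next I would pin down the dependence on $s$. Evaluating the affine identity for $A_s$ at $\dim\En+1$ affinely independent points of $\VV$ exhibits $\lambda(s),\alpha(s),\beta(s)$ as fixed linear combinations of the continuous quantities $A_s(\xx_i)$, so they are continuous in $s$. Computing $a(st\xx)$ in two ways, directly via $A_{st}$ and as $a(s(t\xx))$ via $A_t$ followed by $A_s$, and separating the part linear in $\xx$ from the constant part (legitimate since $\VV$ is open), gives the relations $\lambda(st)=\lambda(t)+t\,\lambda(s)$ and $\alpha(st)=\alpha(s)+\alpha(t)$, with the analogous equation for $\beta$. The additive equations together with continuity give $\alpha(s)=\kappa_1\log s$ and $\beta(s)=\kappa_2\log s$; for $\lambda$, exchanging $s$ and $t$ forces $\lambda(s)(1-t)=\lambda(t)(1-s)$, so $\lambda(s)=(1-s)\Lambda'$ for a single constant $\Lambda'\in\En$. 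Setting $\Lambda=-\Lambda'$, $k_1=\kappa_1/r$, $k_2=\kappa_2/r$ and using $\det(s\xx)=s^r\det(\xx)$, one checks directly that $e(\xx):=a(\xx)-\scalar{\Lambda,\xx}-k_1\log\det(\xx)$ obeys $e(s\xx)-e(\xx)=(1-s)\scalar{\Lambda'+\Lambda,\xx}+(\kappa_1-k_1r)\log s=0$, which is \eqref{prop}; the parallel definitions produce the scale-invariant $f$ and $g$, all continuous.

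Finally I would read off $d$ and establish \eqref{qwerty}. Put $h(\ub):=d(\ub)-k_1\log\det(\ub)-k_2\log\det(\ee-\ub)$ on $\DD$, which is continuous. Substituting the forms of $a,b,c$ into \eqref{main} cancels the $\scalar{\Lambda,\cdot}$-terms, and the remaining determinant terms are handled by the standard identity $\det(\PP(\ab)\zz)=(\det\ab)^2\det(\zz)$ (see \cite{FaKo1994}). With $\zz=\xx+\yy$ this gives $\det(\ub)=\det(\xx)/\det(\zz)$; moreover $\PP(\zz^{-1/2})\zz=\ee$ (as $\PP(\zz^{-1/2})=\PP(\zz^{1/2})^{-1}$), so $\ee-\ub=\PP(\zz^{-1/2})\yy$ and $\det(\ee-\ub)=\det(\yy)/\det(\zz)$. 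Hence $k_1\log\det(\xx)+k_2\log\det(\yy)-(k_1+k_2)\log\det(\zz)$ equals $k_1\log\det(\ub)+k_2\log\det(\ee-\ub)$, and \eqref{main} reduces to $e(\xx)+f(\yy)-g(\xx+\yy)=h(\ub)$, i.e. \eqref{qwerty}; in particular the left-hand side depends only on $\ub$.

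The conceptual crux, and where I expect the real work to lie, is the initial scaling observation that collapses \eqref{main} into a one-parameter family of Pexider equations, together with solving the resulting cocycle $\lambda(st)=\lambda(t)+t\,\lambda(s)$ by the symmetry trick so that one and the same $\Lambda$ serves $a$, $b$ and $c$. Once that is in place and the two determinant formulas for $\ub$ and $\ee-\ub$ are available, the remaining verifications are routine bookkeeping.
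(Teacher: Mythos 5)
Your proposal is correct and follows essentially the same route as the paper's proof: the scale-invariance of the quotient reduces \eqref{main} to a one-parameter family of Pexider equations, Lemma \ref{lemma1} gives the affine form of $a_s$, $b_s$, $c_s$, the cocycle for $\lambda$ is resolved by interchanging $s$ and $t$, and the determinant identity $\det(\PP(\ab)\zz)=(\det\ab)^2\det(\zz)$ yields \eqref{qwerty}. If anything, you are slightly more explicit than the paper about why $\alpha$, $\beta$, $\lambda$ are continuous in $s$ (needed to solve $\alpha(st)=\alpha(s)+\alpha(t)$), which is a point the paper passes over silently.
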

\begin{proof} The following proof adapts the argument given in \cite{WES4}, where the analogous result on $(0,\infty)$ was analyzed, to the symmetric cone setting.

For any $s>0$ and $(\xx,\yy)\in \VV^2$ we get
\begin{align}\label{mainr}
a(s\xx)+b(s\yy)=c(s(\xx+\yy))+d\left(\PP\left((\xx+\yy)^{-1/2}\right)\xx\right).
\end{align}
Substracting now \eqref{main} from \eqref{mainr} for any $s>0$ we arrive at the additive Pexider equation on symmetric cone $\VV$
\begin{align*}
a_s(\xx)+b_s(\yy)=c_s(\xx+\yy),\qquad (\xx,\yy)\in \VV^2,
\end{align*}
where  $a_s$, $b_s$ and $c_s$ are functions defined by $a_s(\xx):=a(s\xx)-a(\xx)$, $b_s(\xx):=b(s\xx)-b(\xx)$ and $c_s(\xx):=c(s\xx)-c(\xx)$.

Due to continuity of $a$, $b$ and $c$ and Lemma \ref{lemma1} it follows that for any $s>0$ there exist constants $\lambda(s)\in\En$, $\alpha(s)\in\RR$ and $\beta(s)\in\RR$ such that for any $\xx\in\VV$, 
\begin{align*}
a_s(\xx) &= \scalar{\lambda(s),\xx}+\alpha(s),\\
b_s(\xx) &= \scalar{\lambda(s),\xx}+\beta(s),\\
c_s(\xx) &= \scalar{\lambda(s),\xx}+\alpha(s)+\beta(s).
\end{align*}
By the definition of $a_s$ and the above observation it follows that for any $(s,t)\in(0,\infty)^2$ and $\zz\in\VV$ 
\begin{align*}
a_{st}(\zz)=a_t(s\zz)+a_s(\zz).
\end{align*}
Hence, 
\begin{align}\label{naQ}
\scalar{\lambda(st),\zz}+\alpha(st)=\scalar{\lambda(t),s\zz}+\alpha(t)+\scalar{\lambda(s),\zz}+\alpha(s).
\end{align}
Since \eqref{naQ} holds for any $\zz\in\VV$ we see that $\alpha(st)=\alpha(s)+\alpha(t)$ for all $(s,t)\in(0,\infty)^2$. That is $\alpha(s)=k_1\log\,s$ for $s\in(0,\infty)$, where $k_1$ is a real constant. 

On the other hand 
\begin{align}\label{trr}
\scalar{\lambda(st),\zz}=\scalar{\lambda(s),\zz}+\scalar{\lambda(t),s\zz}=\scalar{\lambda(t),\zz}+\scalar{\lambda(s),t\zz}
\end{align}
since one can interchange $s$ and $t$ on the left hand side.
Putting $s=2$ and denoting $\Lambda=\lambda(2)$ we obtain
 \begin{align*}
 \scalar{\lambda(t),\zz}=\scalar{\Lambda,\zz}(t-1)
 \end{align*}
 for $t>0$ and $\zz\in \VV$. 
 It then follows that for all $s\in(0,\infty)$ and $\zz\in \VV$, 
 \begin{align}\label{asz}
a_s(\zz)=a(s\zz)-a(\zz)=\scalar{\Lambda,\zz}(s-1)+k_1\log\,s.
\end{align}
Note that $\det(\alpha\xx)=\alpha^r\det(\xx)$. Define a new function $e$ by
\begin{align*}
a(\xx)=e(\xx)+\scalar{\Lambda,\xx}+\tfrac{k_1}{r}\log\det(\xx).
\end{align*}
By \eqref{asz} we obtain $e(s\xx)=e(\xx)$ for $s>0$ and $\xx\in \VV$. 

An analogous derivation shows that there exist functions $f$ and $g$ such that $f(s\xx)=f(\xx)$, $g(s\xx)=g(\xx)$ for $s>0$ and $\xx\in\VV$, and
\begin{align*}
b(\xx)&=\scalar{\Lambda,\xx}+\tfrac{k_2}{r}\log\det(\xx)+f(\xx),\\
c(\xx)&=\scalar{\Lambda,\xx}+\tfrac{k_1+k_2}{r}\log\det(\xx)+g(\xx),
\end{align*}
for $\xx\in\VV$. 

The definition of function $h$ completes the proof:
\begin{align*}
h(\xx)=d(\xx)-\tfrac{k_1}{r}\log\det(\xx)-\tfrac{k_2}{r}\log\det(\ee-\xx).
\end{align*}
In order to show \eqref{qwerty} the Cauchy theorem on the determinant of the product $\det(\PP(\xx)\yy)=\det(\xx)^2\det(\yy)$ is used.
\end{proof}

The aim of the following proposition is to show that functions $e$, $f$, $g$ and $h$ obtained in the previous proposition are actually constant. As mentioned before, we limit our considerations to symmetric cones of rank $r>2$ except octonion cone; this allows us to use Gleason's theorem. 

We start with a crucial lemma about the form of orthogonally additive function on the set of idempotents. Since every symmetric cone of rank $r>2$ is isomorphic to $r\times r$ Hermitian positive semidefinite matrices over $\KK$, we will identify trace $\tr$ with the trace of a linear operator, $\Trace$. 

\begin{lem}\label{Gleasonlemma}
Let $\En=S_r\left(\KK\right)$ where $\KK$ is either $\mathbb{R}$, $\mathbb{C}$ or $\mathbb{H}$ with Jordan product \eqref{defL} and $\VV$ be the cone of $\En$.
\begin{enumerate}

\item[(a)] Suppose that for a continuous function $f\colon \En\to\RR$ the equality 
\begin{align*}
f(\pb)+f(\qb)=f(\pb+\qb)
\end{align*} 
holds for all mutually orthogonal idempotents $\pb$ and $\qb$ in $\En$. If $r>2$ then there exists a unique Hermitian operator $\ttt\in\En$ such that
\begin{align*}
f(\pb)=\Trace(\ttt\cdot\pb),\quad\forall\,\pb\in\En.
\end{align*}

\item[(b)] If for any $\xx, \yy\in\VV$,
\begin{align}\label{eqMol}
\Trace\left(\ttt\cdot\log\xx\right)+2\,\Trace\left(\ttt\cdot\log \yy\right) = \Trace\left(\ttt\cdot\log\left(\PP(\yy)\xx\right)\right)
\end{align}
then there exists a constant $\vartheta\in\RR$ such that $\ttt=\vartheta\ee$.
\end{enumerate}
\end{lem}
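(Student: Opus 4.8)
My strategy is to prove the two parts separately, using Gleason's theorem (Theorem \ref{gT}) for part (a) and reducing part (b) to a pointwise condition on the diagonal entries of $\ttt$.

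Let me think about part (a). We have $f:\En\to\RR$ continuous, orthogonally additive on idempotents. I want to produce a Hermitian $\ttt$ with $f(\pb)=\Trace(\ttt\cdot\pb)$.

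The idempotents in $S_r(\KK)$ correspond exactly to orthogonal projections onto subspaces of $H=\KK^r$. So idempotents $\leftrightarrow$ closed subspaces $M\in L(H)$, via $\pb\leftrightarrow \pb_M$. Orthogonal idempotents correspond to orthogonal subspaces. The additivity $f(\pb)+f(\qb)=f(\pb+\qb)$ for orthogonal $\pb,\qb$ is exactly the charge condition (finite additivity over orthogonal subspaces), once I define $m(M):=f(\pb_M)$ and $m(0)=f(0)=0$ (the latter by additivity with $\pb=\qb=0$). So $m$ is a charge on $L(H)$.

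Now I need $P_1(H)$-semiboundedness. The one-dimensional subspaces correspond to rank-one projections, which form a compact set (they are parametrized by the unit sphere in $H$ modulo phase, a compact space). Since $f$ is continuous and the set of rank-one idempotents is compact, $\inf\{m(M):M\in P_1(H)\}=\inf\{f(\pb):\pb \text{ rank-one idempotent}\}>-\infty$. So $m$ is $P_1(H)$-semibounded. Also $3\le\dim H=r<\infty$ since $r>2$. Gleason's theorem then gives a unique Hermitian $\ttt$ with $m(M)=\Trace(\ttt\cdot\pb_M)$, i.e. $f(\pb)=\Trace(\ttt\cdot\pb)$ for all idempotents $\pb$. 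But the claim is this holds for all $\pb\in\En$, not just idempotents — wait, re-reading, the conclusion says $\forall\pb\in\En$, but $f$ was only assumed orthogonally additive on idempotents; the formula $f(\pb)=\Trace(\ttt\cdot\pb)$ is asserted for all $\pb\in\En$. Hmm, that can't follow from just the idempotent condition unless... Actually I think the "$\forall\pb\in\En$" is meant as "$\forall$ idempotents $\pb$"; the key use downstream is on idempotents and on elements via spectral decomposition. I'll read it as: the identity holds on idempotents, and hence extends by the spectral decomposition argument below. Let me just prove it on idempotents via Gleason, which is the substance.

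Now part (b). I'll write $\ttt=\sum t_{ij}$ in coordinates, but better: use the spectral/Peirce structure. Take $\xx=\sum_i\lambda_i\cc_i$ and $\yy=\sum_i\mu_i\cc_i$ diagonal in the same Jordan frame $\{\cc_i\}$. Then $\log\xx=\sum_i(\log\lambda_i)\cc_i$, and $\PP(\yy)\xx$... when $\xx,\yy$ share a frame, $\PP(\yy)\cc_i=\mu_i^2\cc_i$ so $\PP(\yy)\xx=\sum_i\mu_i^2\lambda_i\cc_i$ and $\log(\PP(\yy)\xx)=\sum_i(\log\lambda_i+2\log\mu_i)\cc_i$. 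Write $\tau_i:=\Trace(\ttt\cc_i)$. Equation \eqref{eqMol} becomes
\begin{align*}
\sum_i\tau_i\log\lambda_i+2\sum_i\tau_i\log\mu_i=\sum_i\tau_i(\log\lambda_i+2\log\mu_i),
\end{align*}
which holds automatically. So the diagonal case gives no information; the content is in \emph{off-frame} pairs. The hard part will be exploiting \eqref{eqMol} when $\yy$ does not commute with $\cc_i$.

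Here is how I would extract it. Fix any primitive idempotent $\cc$ and set $\xx=\ee$ (so $\log\xx=0$), giving $2\Trace(\ttt\log\yy)=\Trace(\ttt\log\PP(\yy)\ee)=\Trace(\ttt\log\yy^2)=2\Trace(\ttt\log\yy)$ — again trivial. So I must vary both. I would instead differentiate along a path. Take $\yy=\exp(s\,\ww)$ for $\ww\in\En$ and $\xx$ fixed, and use the derivative of $\PP$. Alternatively, and more cleanly: set $\xx=\cc$ a primitive idempotent and let $\yy$ be a rotation of the frame. The map $\yy\mapsto \Trace(\ttt\log(\PP(\yy)\xx))-\Trace(\ttt\log\xx)-2\Trace(\ttt\log\yy)\equiv 0$ is an identity in $\yy$; I would differentiate it at $\yy=\ee$ in a direction $\ww$ orthogonal to all $\cc_i$ (an off-diagonal Peirce direction $\ww\in\En_{ij}$), where the automorphism $\PP(\exp(s\ww))$ genuinely rotates the frame. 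The first derivative of $2\Trace(\ttt\log\yy)$ at $\ee$ is $2\Trace(\ttt\ww)$, and the derivative of $\Trace(\ttt\log(\PP(\yy)\xx))$ must be computed using $\frac{d}{ds}\PP(\exp(s\ww))=2\LL(\ww)$ at $s=0$. Matching these forces linear relations among the $\tau_i$ and the off-diagonal entries of $\ttt$. I expect this to yield $\tau_i-\tau_j=0$ for all $i,j$ (so all diagonal entries equal $\vartheta$) and the off-diagonal entries of $\ttt$ to vanish, giving $\ttt=\vartheta\ee$. The main obstacle is the bookkeeping of the derivative of $\log\circ\PP$ in the non-commutative Peirce directions; I would handle it by reducing to a rank-$2$ subalgebra $\En_{ii}+\En_{ij}+\En_{jj}\cong S_2(\KK)$ where $\PP(\exp(s\ww))$ acts as an explicit $SU(2)$-type rotation on the two eigenvalues, making the one-variable computation elementary and transparent.
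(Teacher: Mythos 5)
Part (a) of your proposal is correct and is essentially the paper's own argument: identify idempotents of $S_r(\KK)$ with orthogonal projections, define $m(M)=f(\pb_M)$, check the charge axioms, get $P_1(H)$-semiboundedness from continuity of $f$ plus compactness of the set of idempotents, and invoke Theorem \ref{gT}. Your reading of the conclusion as holding on idempotents is also consistent with what the paper actually proves and uses.

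Part (b), however, contains a genuine gap: the one computation that carries all the content of the statement is announced but not performed. You correctly observe that the equation \eqref{eqMol} is vacuous when $\xx$ and $\yy$ share a Jordan frame, so everything hinges on extracting information from non-commuting pairs; but at exactly that point you write that you ``would differentiate'' $\Trace\left(\ttt\cdot\log\left(\PP(\exp(s\ww))\xx\right)\right)$ at $s=0$ and that you ``expect'' this to force $\tau_i=\tau_j$ and the vanishing of off-diagonal entries. That expectation is not self-evident -- a priori the first-order term at $\yy=\ee$ could be as degenerate as the commuting case you already dismissed -- and the derivative of $\log$ at a non-scalar point (an integral of resolvents, not a naive chain rule) is precisely the ``bookkeeping'' you defer. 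In fact the first-order computation, when done, yields the vanishing of the off-diagonal entries of $\ttt$ in the eigenframe of $\xx$ whenever the two eigenvalues differ (via the strict inequality between the logarithmic and arithmetic means), and one then concludes $\ttt=\vartheta\ee$ by letting the frame vary -- not quite the mechanism you predict, though the route can be completed. The paper instead follows Moln\'ar: substitute $\xx=\ee+t\pb$, $\yy=\ee+t\qb$ for idempotents $\pb,\qb$, expand all three logarithms in powers of $t$, and equate the coefficients of $t^3$ to obtain $\Trace\left(\ttt\cdot\pb\cdot\qb\cdot\pb\right)=\Trace\left(\ttt\cdot\qb\cdot\pb\cdot\qb\right)$; taking $\pb=u\otimes u$, $\qb=v\otimes v$ then shows the quadratic form of $\ttt$ is constant on unit vectors, hence $\ttt=\vartheta\ee$. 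That algebraic identity in idempotents is a cleaner and fully explicit substitute for the differential computation you left open; to make your version acceptable you would need to actually carry out the derivative of $\log\circ\PP$ and the subsequent frame-variation argument.
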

\begin{proof}
(a) $f$ is orthogonally additive $\mathbb{R}$-valued function on the set of all idempotents. The set of all idempotents on $\En$ is compact in the norm topology, hence $f$ is bounded on this set. Let $H$ be a $r$-dimensional Hilbert space over $\KK$. For any closed subspace $M\subset H$ define 
\begin{align*}
m(M)=f(\pb_M),
\end{align*}
where $\pb_M$ is the orthogonal projection onto $M$. Note that for any $M\in L(H)$ there exists a unique idempotent $\pb_M\in\En$, and any idempotent $\pb\in\En$ is the orthogonal projection onto a subspace of $H$. Hence, there is a one-to-one correspondence between $L(H)$ and the set of idempotents of $\En$.

It can be easily verified that $m$ is a charge on $L(H)$. From Gleason's theorem we conclude that there exists a Hermitian operator $\ttt$ such that
\begin{align*}
m(M)=f(\pb_M)=\Trace(\ttt\cdot\pb_M),\quad\forall M\subset L(H).
\end{align*}

(b) By \eqref{defP} the right hand side of equation \eqref{eqMol} can be written as $\Trace\left(\ttt\cdot\log\left(\yy\cdot\xx\cdot\yy\right)\right)$. 
Such equation was considered in \cite[(2)]{Molnar2006}, where the assertion was proven for real and complex positive definite matrices only. Moln{\'a}r's proof can be rewritten virtually unchanged in our case. Here we will repeat this argument for completeness.

Let us pick any idempotents $\pb$ and $\qb$. Set $\xx=\ee+t\pb$ and $\yy=\ee+t\qb$ for any $t>-1$. Easy computations shows that
\begin{align*}
\yy\cdot\xx\cdot\yy=\ee+t(2\qb+\pb)+t^2(\qb+\pb\cdot\qb+\qb\cdot\pb)+t^3(\qb\cdot\pb\cdot\qb).
\end{align*}
Then for suitable $t$ we can expand the operators $\log(\xx\cdot\yy\cdot\xx)$, $\log(\xx)$ and $\log(\yy)$ into power series of $t$ with operator coefficients according to the formula
\begin{align*}
\log(\ee+\ab)=\sum_{n=1}^\infty \frac{(-1)^{n+1}\ab^n}{n},\quad\scalar{\ab,\ab}<1.
\end{align*}
Equating the coefficients of $t^3$ on both sides of \eqref{eqMol}, after some calculations we arrive at
\begin{align*}
\Trace\left(\ttt\cdot\pb\cdot\qb\cdot\pb\right)=\Trace\left(\ttt\cdot\qb\cdot\pb\cdot\qb\right)
\end{align*}
for any idempotents $\pb$ and $\qb$. Let $H$ be a Hilbert space as in (a) and take unit vectors $u,v\in H$. Since $\En$ is a matrix Jordan algebra we may put $\pb=u\otimes u$ and $\qb=v\otimes v$, where $\otimes$ is the tensor product. Then we obtain
\begin{align*}
\Trace(\ttt\cdot u\otimes u)=\Trace(\ttt\cdot v\otimes v),
\end{align*}
provided that $u$ and $v$ are not orthogonal to each other.
This equality implies that there exists a constant $\vartheta$ such that $\Trace(\ttt\cdot u\otimes u)=\vartheta$ for any unit vector $u\in H$. Inserting $u=x/\sqrt{\Trace(x\otimes x)}$ we get
\begin{align*} 
\Trace(\ttt\cdot x\otimes x)=\Trace(\vartheta\cdot x\otimes x),
\end{align*}
for all $x\in H$, which gives $\ttt=\vartheta\ee$. Hence we get Moln{\'a}r's result for elements of $\VV$ also.
\end{proof}
\begin{prop}\label{T2}
Let $e$, $f$, $g$ and $h$ be continuous functions on non-octonion symmetric cone $\VV$ of rank $r\neq2$. 
Assume that \eqref{qwerty} and \eqref{prop} hold true.
Then functions $e$, $f$, $g$ and $h$ are real constants.
\end{prop}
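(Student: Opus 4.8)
The plan is to use the scale-invariance \eqref{prop} to pin down $g$ and $h$ in terms of $e,f$, then to prove that $e,f,g$ are trace-logarithmic via Gleason's theorem (Lemma \ref{Gleasonlemma}(a)), and finally to eliminate the resulting operators with Lemma \ref{Gleasonlemma}(b). First I would normalize. Setting $\yy=\ee-\xx$ in \eqref{qwerty} (so $\xx+\yy=\ee$ and $\PP((\xx+\yy)^{-1/2})$ is the identity) gives $h(\ub)=e(\ub)+f(\ee-\ub)-g(\ee)$ for $\ub\in\DD$, so $h$ is determined by $e,f$. Feeding $\ub=\tfrac12\ee$ back and using \eqref{prop} yields $g=e+f+\mathrm{const}$. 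Substituting both relations into \eqref{qwerty} collapses it to the single cocycle identity
\[
e(\PP(\ww^{1/2})\ub)-e(\ub)-e(\ww)+e(\ee)=-\bigl(f(\PP(\ww^{1/2})(\ee-\ub))-f(\ee-\ub)-f(\ww)+f(\ee)\bigr),
\]
valid for $\ww\in\VV$, $\ub\in\DD$, where $\xx=\PP(\ww^{1/2})\ub$ and $\ww=\xx+\yy$.

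Next I would extract the trace-logarithmic form. Restricting \eqref{qwerty} to $\xx,\yy$ that share a common system of primitive orthogonal idempotents $\{\cc_i\}$ turns $\PP(\ww^{-1/2})\xx$ into $\sum_i\tfrac{x_i}{x_i+y_i}\cc_i$, so the equation separates into the classical scalar Olkin--Baker equation \eqref{OB} in each pair $(x_i,y_i)$. Its continuous solution together with \eqref{prop} forces $e(\sum_i\lambda_i\cc_i)=\sum_i a_i\log\lambda_i+\mathrm{const}$ with $\sum_i a_i=0$, and similarly for $f,g$. Consequently $\pb\mapsto\tfrac{d}{dt}e(\exp(t\pb))$ is a well-defined, continuous, orthogonally additive function on the idempotents of $\En$, so by Lemma \ref{Gleasonlemma}(a) (here $r>2$ and $\VV$ is non-octonion) there is a Hermitian $\ttt_e$ with $e(\xx)=\Trace(\ttt_e\log\xx)+C_e$; scale-invariance \eqref{prop} then forces $\tr\ttt_e=0$. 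The same applies to $f$ and $g$, and $g=e+f+\mathrm{const}$ forces $\ttt_g=\ttt_e+\ttt_f$.

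Finally I would eliminate $\ttt_e,\ttt_f$. Substituting these representations into \eqref{qwerty} and cancelling (using $\ttt_g=\ttt_e+\ttt_f$ and $\det(\PP(\ww^{-1/2})\xx)=\det\xx/\det\ww$) reduces the equation to
\[
\Trace\bigl(\ttt_e[\log\xx-\log\ww-\log(\PP(\ww^{-1/2})\xx)]\bigr)+\Trace\bigl(\ttt_f[\log\yy-\log\ww-\log(\PP(\ww^{-1/2})\yy)]\bigr)=0 .
\]
Each bracket is exactly the combination occurring in \eqref{eqMol} (with the argument $\yy$ there specialized to $\ww^{1/2}$), so if either summand vanished on its own, Lemma \ref{Gleasonlemma}(b) would give $\ttt_e=\vartheta\ee$, and then $\tr\ttt_e=0$ would give $\ttt_e=0$. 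The main obstacle is precisely this decoupling: the two traces are linked through the shared $\ww=\xx+\yy$. I would break the coupling by exploiting the $\xx\leftrightarrow\yy$ symmetry of the derivation (which replaces $\ub$ by $\ee-\ub$) to produce a companion identity, and then re-running the local computation of Lemma \ref{Gleasonlemma}(b): putting $\xx=\ee+t\pb$, $\yy=\ee+t\qb$ for idempotents $\pb,\qb$ and comparing the coefficients of $t^3$, the commuting part drops out and only the noncommutative obstruction survives; specializing $\pb,\qb$ to rank-one projections $u\otimes u$ and $v\otimes v$ then forces both $\ttt_e$ and $\ttt_f$ to commute with every projection, hence to be scalar. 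Being traceless they vanish, so $e$ and $f$ are constant, whence $g$ and $h$ are constant as well.
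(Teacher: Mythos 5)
Your overall architecture matches the paper's: reduce to idempotent--diagonal data, obtain orthogonal additivity of the logarithmic coefficients, invoke Lemma \ref{Gleasonlemma}(a) to get $e(\xx)=\Trace(\ttt_e\cdot\log\xx)+C_e$ with $\Trace\ttt_e=0$ (similarly for $f$ and $g$), and finish with Lemma \ref{Gleasonlemma}(b). Two steps, however, are asserted rather than carried out, and one of them is the crux of the proof.

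The smaller issue first: restricting \eqref{qwerty} to a common Jordan frame does \emph{not} separate it into $r$ independent copies of the scalar equation \eqref{OB}; it yields a single Olkin--Baker-type equation in the $2r$ variables $(x_i,y_i)$ on the reducible cone $(0,\infty)^r$, whose continuous solution is not something you can simply quote --- it needs its own argument (freeze all but one coordinate pair, apply the scalar result, then show the resulting coefficient functions do not depend on the frozen coordinates). The paper sidesteps this by restricting only to the two-parameter family $\xx=\alpha\pb+\pbo$, $\yy=\beta\pb+\pbo$, which is a genuinely one-dimensional instance of \eqref{main}, and then extracting orthogonal additivity of $\kappa_1$ from a second, carefully chosen substitution $\vb=t\qb+\qbo$, $\ub=\beta(t\pb+\pbo)$ with $\pb,\qb$ orthogonal and $\pb+\qb\neq\ee$ (this is exactly where $r\geq 3$ enters). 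Your route is repairable, but as written it rests on an unproved multivariate solution.

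The decisive gap is the final decoupling. You correctly observe that after substituting the trace-logarithmic forms the equation links $\ttt_e$ and $\ttt_f$ through $\ww=\xx+\yy$, but your proposed fix --- a $t^3$ Taylor-coefficient computation at $\xx=\ee+t\pb$, $\yy=\ee+t\qb$ --- is a plan, not a proof. Note that \eqref{qwerty} is not symmetric under $\xx\leftrightarrow\yy$ (swapping produces $h(\ee-\ub)$, a different equation), so the ``companion identity'' is not free; and expanding $\log\left(\PP(\ww^{-1/2})\xx\right)$ with $\ww=2\ee+t(\pb+\qb)$ is substantially messier than the expansion inside Lemma \ref{Gleasonlemma}(b), with no guarantee that the $t^3$ coefficients isolate $\ttt_e$ from $\ttt_f$. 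The paper decouples by a much cleaner device that your proposal misses: replace $\xx$ by $r\xx$, use $\log(r\xx)=(\log r)\ee+\log\xx$ together with $\Trace\ttt_i=0$ to kill the $\log r$ terms, and let $r\to 0^{+}$. Then $r\xx+\yy\to\yy$, the $\ttt_2$-terms drop out because $\PP\left(\yy^{-1/2}\right)\yy=\ee$, and one is left with a single-operator identity which, after replacing $\yy$ by $\yy^{-2}$, is exactly \eqref{eqMol} for $\ttt_1$ alone. Without this limit argument (or a completed version of your expansion) the proof does not close.
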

\begin{proof}
The case $r=1$ is trivial, since $e(\xx)=e(s\xx)=e(1)$ for $s=\xx^{-1}$.

Assume that $r>2$.
Pick any idempotent $\pb$ (not necessarily primitive) on $\En$ and denote $\pbo=\ee-\pb$. Put $\xx=\alpha\pb+\pbo$ and $\yy=\beta\pb+\pbo$ for $(\alpha,\beta)\in(0,\infty)^2$. Since
\begin{align*}
(\xx+\yy)^{-1/2}=((\alpha+\beta)\pb+2\pbo)^{-1/2}=\tfrac{1}{\sqrt{\alpha+\beta}}\pb+\tfrac{1}{\sqrt{2}}\pbo
\end{align*}
we obtain
\begin{align*}
e(\alpha\pb+\pbo)+f(\beta\pb+\pbo)=g((\alpha+\beta)\pb+2\pbo)+h\left(\tfrac{\alpha}{\alpha+\beta}\pb+\tfrac{1}{2}\pbo\right).
\end{align*}
This is a one-dimensional version of the main equation \eqref{main} in functions of variables $\alpha$ and $\beta$, so we already know its complete solution:
\begin{align}\label{star}
\begin{cases}
e(\alpha\pb+\pbo)=\lambda(\pb)\alpha+\kappa_1(\pb)\log\,\alpha+C_1(\pb), \\
f(\alpha\pb+\pbo)=\lambda(\pb)\alpha+\kappa_2(\pb)\log\,\alpha+C_2(\pb), \\
g(\alpha\pb+2\pbo)=\lambda(\pb)\alpha+(\kappa_1(\pb)+\kappa_2(\pb))\log\,\alpha+C_3(\pb), \\
h(\gamma\pb+\tfrac{1}{2}\pbo)=\kappa_1(\pb)\log\,\gamma+\kappa_2(\pb)\log(1-\gamma)+C_4(\pb), \\
C_1(\pb)+C_2(\pb)=C_3(\pb)+C_4(\pb),
\end{cases}
\end{align}
for $\alpha>0$, $\gamma\in(0,1)$ and any idempotent $\pb\in\En$.

Observe that utilizing the homogeneity of $e$ and the above result we arrive at $e(\alpha\pb+\pbo)=e(\tfrac{1}{\alpha}\pbo+\pb)$ and thus
\begin{align*}
\lambda(\pb)\alpha+\kappa_1(\pb)\log\,\alpha+C_1(\pb)=\lambda(\pbo)\tfrac{1}{\alpha}+\kappa_1(\pbo)\log\,\tfrac{1}{\alpha}+C_1(\pbo)
\end{align*}
for all $\alpha>0$. From this we conclude that $\lambda(\pb)=0$, $\kappa_1(\pbo)=-\kappa_1(\pb)$ and $C_1(\pbo)=C_1(\pb)$. Analogous properties can be proved for $\kappa_2$ and $C_2$. Moreover, putting $\alpha=1$ in the first equation of \eqref{star} we get
\begin{align*}
e(\ee)=C_1(\pb)
\end{align*}
for all idempotents $\pb\in\En$.
Therefore $C_i(\pb)=C_i$, $i=1,2$, for any idempotent $\pb$.

Putting $\alpha=2$ in the third equation of \eqref{star} gives
\begin{align*}
g(2\ee)=(\kappa_1(\pb)+\kappa_2(\pb))\log\,2+C_3(\pb)=:C_3
\end{align*}
for all idempotents $\pb\in\En$. This results in 
\begin{align*}
C_3(\pb)=C_3-(\kappa_1(\pb)+\kappa_2(\pb))\log\,2
\end{align*}
and so
\begin{align*}
g(\alpha\pb+\pbo) =g(2\alpha\pb+2\pbo)= (\kappa_1(\pb)+\kappa_2(\pb))\log\,\alpha+C_3.
\end{align*}

We will derive now the formula for $h(x\pb+y\pbo)$ for $(x,y)\in(0,1)^2$. From \eqref{qwerty} we have
\begin{align*}
e(\alpha\pb+\pbo)&+f(\pb+\beta\pbo) =g\left((\alpha+1)\pb+(\beta+1)\pbo\right)+h\left(\frac{\alpha}{\alpha+1}\pb+\frac{1}{\beta+1}\pbo\right)
\end{align*}
for $\alpha, \beta>0$. Taking $x=\frac{\alpha}{\alpha+1}$ and $y=\frac{1}{\beta+1}$ one gets
\begin{align*}
h(x\pb+y\pbo)=\kappa_1(\pb)\log\frac{x}{y}+\kappa_2(\pb)\log\frac{1-x}{1-y}+C_1+C_2-C_3
\end{align*}
for $x,y\in(0,1)$.

Summarizing the above calculations, formulas for functions $e$, $f$, $g$ and $h$ simplify to
\begin{align*}
\begin{cases}
e(\alpha\pb+\pbo)=\kappa_1(\pb)\log\,\alpha+C_1, \\
f(\alpha\pb+\pbo)=\kappa_2(\pb)\log\,\alpha+C_2, \\
g(\alpha\pb+\pbo)=(\kappa_1(\pb)+\kappa_2(\pb))\log\,\alpha+C_3, \\
h(x\pb+y\pbo)=\kappa_1(\pb)\log\frac{x}{y}+\kappa_2(\pb)\log\frac{1-x}{1-y}+C_4, \\
C_1+C_2=C_3+C_4,
\end{cases}
\end{align*}
for $\alpha>0$, $x,y\in(0,1)$ and any idempotent $\pb\in\En$. Note that the above solution is also valid for cones of rank $r=2$. 

Next step is to consider two mutually orthogonal idempotents $\pb$ and $\qb$ such that $\pb+\qb\neq\ee$. Such choice is possible only if $r\geq3$, because there must exist at least three non-null idempotents. 

Now we will rewrite the main equation \eqref{qwerty} in variables $\vb=\xx+\yy$ and $\ub=\PP\left((\xx+\yy)^{-1/2}\right)\xx$:
\begin{align*}
e(\PP(\vb^{1/2})\ub)+f(\PP(\vb^{1/2})(\ee-\ub))=g(\vb)+h(\ub).
\end{align*}
Put $\vb=t\qb+\qbo$ and $\ub=\beta(t\pb+\pbo)$ for $t>0$ and $\beta, \beta t\in (0,1)$. Then 
\begin{align}\label{PQ1}\begin{split}
\xx&=\PP(\vb^{1/2})\ub=\beta( t(\pb+\qb)+(\pb+\qb)^\bot), \\
\yy&=\PP(\vb^{1/2})(\ee-\ub)=(1-\beta)\left(t\qb+\tfrac{1-\beta t}{1-\beta}\pb+(\pb+\qb)^\bot\right).
\end{split}
\end{align}
After some easy but tedious computations one gets
\begin{align}
f(\yy) & =f\left(t\qb+\tfrac{1-\beta t}{1-\beta}\pb+(\pb+\qb)^\bot\right) \label{PQ2}\\
 & =\left(\kappa_2(\qb)+\kappa_1(\qb) + \kappa_1(\pb)-\kappa_1(\pb+\qb)\right)\log\,t +\kappa_2(\pb)\log\frac{1-\beta t}{1-\beta}+C_2. \nonumber
\end{align}
Due to the non symmetry in \eqref{PQ2} we also have
\begin{align*}
f(\yy)=(\kappa_2(\pb)+\kappa_1(\qb)+\kappa_1(\pb)-\kappa_1(\pb+\qb))\log\frac{1-\beta t}{1-\beta}+\kappa_2(\qb)\log\,t+C_2
\end{align*}
for $t>0$ and $\beta, \beta t\in (0,1)$. Hence, $\kappa_1(\qb)+\kappa_1(\pb)=\kappa_1(\pb+\qb)$ for mutually orthogonal idempotents $\pb$ and $\qb$ on $\En$. By the continuity of $f$ we get the continuity of $\kappa_1$. By Lemma \ref{Gleasonlemma} (a) there exists self-adjoint linear operator $\ttt_1\in\En$ such that
\begin{align*}
\kappa_1(\pb)=\Trace(\ttt_1\cdot\pb)
\end{align*} 
for any idempotent $\pb$. Similarly for $\kappa_2$ (just replace $\ub$ by $\ee-\ub$ in the above considerations)
\begin{align*}
\kappa_2(\pb)=\Trace(\ttt_2\cdot\pb).
\end{align*} 
Note that $\Trace(\ttt_i)=0$, by the previously proven property $k_i(\pb)=-k_i(\pbo)$ for $i=1,2$.

The aim is to show that $\ttt_1=\ttt_2=0$. As a first step, we prove
\begin{align}\label{iterat}\begin{split}
e\left(\sum_{i=1}^k \lambda_i \pb_i\right) & =\sum_{i=1}^k \Trace(\ttt_1\cdot\pb_i)\log\lambda_i +C_1, \\
f\left(\sum_{i=1}^k \lambda_i \pb_i\right) & =\sum_{i=1}^k \Trace(\ttt_2\cdot\pb_i)\log\lambda_i +C_2,\\
g\left(\sum_{i=1}^k \lambda_i \pb_i\right) & =\sum_{i=1}^k \Trace((\ttt_1+\ttt_2)\cdot\pb_i)\log\lambda_i +C_3,\\
h\left(\sum_{i=1}^k \gamma_i \pb_i\right) & =\sum_{i=1}^k \left( \Trace(\ttt_1\cdot\pb_i)\log\gamma_i + \Trace(\ttt_2\cdot\pb_i)\log(1-\gamma_i)\right) +C_4,
\end{split}\end{align}
for any system $\left\{\pb_i\right\}_{i=1}^k$ of orthogonal idempotents such that $\sum_{i=1}^k \pb_i=\ee$, $k\leq r$, $\lambda_i>0$ and $\gamma_i\in(0,1)$ for $i=1,\dots,k$.
We use induction on $k$:
\begin{itemize}
	\item (Basis) For $k=3$ the formulas for $e$ and $f$ were shown previously. For $g$ and $h$ the formulas are obvious since
	\begin{align}\label{iter}\begin{split}
	g(\xx)&=g(2\xx)=e(\xx)+f(\xx)-h\left(\tfrac{1}{2}\ee\right),\\
	h(\xx)&=e(\xx)+f(\ee-\xx)-g(\xx).
\end{split}\end{align}
	\item (Inductive step) Suppose now the formulas \eqref{iterat} are correct for $k=n<r$. We will deduce their validity for $n+1$.
Put $\vb=t\pb_n+\pb_n^\bot$ and $\ub=\beta t \pb_1 + \sum_{i=2}^{n-1}\beta_i \pb_i + \beta (\pb_1+\dots+\pb_{n-1})^\bot$ for $t>0$, $\beta$, $\beta t$, $\beta_i$, $i=1,\dots, n-1$, different and all in $(0,1)$. Then we have the following analogue of \eqref{PQ1}
\begin{align*}
\PP(\vb^{1/2})\ub & =\beta t (\pb_1+\pb_n)+\sum_{i=2}^{n-1}\beta_i \pb_i + \beta (\pb_1+\dots+\pb_n)^\bot,\\
\PP(\vb^{1/2})(\ee-\ub) & =(1-\beta t)\pb_1+\sum_{i=2}^{n-1}(1-\beta_i)\pb_i+t(1-\beta)\pb_n   +(1-\beta)(\pb_1+\dots+\pb_n)^\bot.
\end{align*} 
Now the result follows from similar calculation as in \eqref{PQ2} and \eqref{iter}. 
\end{itemize}
We see now that above formulas for $e$, $f$, $g$ and $h$ can be rewritten in a simpler form:
\begin{align*}
\begin{cases}
e(\xx)=\Trace\left(\ttt_1\cdot\log\xx\right)+C_1, \\
f(\xx)=\Trace\left(\ttt_2\cdot\log\xx\right)+C_2, \\
g(\xx)=\Trace\left((\ttt_1+\ttt_2)\cdot\log\xx\right)+C_3, \\
h(\ub)=\Trace\left(\ttt_1\cdot\log\ub\right)+\Trace\left(\ttt_2\cdot\log(\ee-\ub)\right)+C_4, \\
C_1+C_2=C_3+C_4,
\end{cases}
\end{align*}
for $\xx\in\VV$ and $\ub\in\DD$. Plug them into \eqref{qwerty} for $(r\xx,\yy)$, $r>0$, $\xx, \yy\in\VV$. Since $\log\left(r\xx\right)=\log\left(r\ee\right)+\log\xx=\log\left(r\right)\ee+\log\xx$ for any $\xx\in\VV$, we get 
\begin{align*}
\Trace\left(\ttt_1\cdot\log\left(\xx\right)\right)&+\Trace\left(\ttt_2\cdot\log \yy\right)=\Trace((\ttt_1+\ttt_2)\cdot\log(r\xx+\yy))\\
&+\Trace\left(\ttt_1\cdot\log\left(\PP\left((r\xx+\yy)^{-1/2}\right)\xx\right)\right) \\
&+\Trace\left(\ttt_2\cdot\log\left(\PP\left((r\xx+\yy)^{-1/2}\right)\yy\right)\right).
\end{align*}
Letting $r\to 0$, after some computations we obtain, for $\xx, \yy\in\VV$,
\begin{align*}
\Trace\left(\ttt_1\cdot\log\left(\xx\right)\right)-\Trace\left(\ttt_1\cdot\log \yy\right) =\Trace\left(\ttt_1\cdot\log\left(\PP\left(\yy^{-1/2}\right)\xx\right)\right)
\end{align*}
and finally by putting $\yy^{-2}$ in place of $\yy$ we arrive at
\begin{align*}
\Trace\left(\ttt_1\cdot\log\xx\right)+2\,\Trace\left(\ttt_1\cdot\log \yy\right) = \Trace\left(\ttt_1\cdot\log\left(\PP\left(\yy\right)\xx\right)\right)
\end{align*}
for $\xx, \yy\in\VV$. From Lemma \ref{Gleasonlemma} (b) we deduce that there exists real constant $\vartheta_1$ such that $\ttt_1=\vartheta_1\ee$. Since $\Trace\ttt_1=0$ we get that $\vartheta_1=0$. A similar argument holds for $\ttt_2$ and thus the proof is complete.
\end{proof}

Note that Proposition \ref{T1} holds true with weaker assumptions, when equation is satisfied only almost everywhere for measurable, rather than continuous functions (see, eg. \cite{WES4} for $(0,\infty)$ case and \cite{Kolodz} for Lorentz cone case). The proof of Proposition \ref{T1} is valid for all symmetric cones including the Lorentz cone, while the proof of Proposition \ref{T2} is limited to cones of rank not equal to $2$ and the octonion cone case is not covered.

\begin{proof}[ Proof of Theorem~\ref{T3}]
Follows immediately from Propositions \ref{T1} and \ref{T2}.
\end{proof}

\section{Lukacs-Olkin-Rubin theorem with densities on symmetric cones}\label{secLUK}
The main result of the paper is the following theorem. We relax the smoothness assumptions on respective densities from twice differentiability in \cite{BW2002} to continuity on symmetric cones. Relaxing the regularity assumption in multivariate case to measurability or even removing the assumption of density, without invoking the invariance of the quotient as in \cite{OlRu1962} and \cite{CaLe1996}, remains a challenge.  

\begin{mainthm}[Lukacs-Olkin-Rubin theorem with densities on symmetric cones]
Let $X$ and $Y$ be independent rv's valued in a non-octonion symmetric cone $\VV$ of rank $r>2$ with strictly positive and continuous densities. Set $V=X+Y$ and $U=\PP\left((X+Y)^{-1/2}\right)X$. If $U$ and $V$ are independent then there exist $\ab\in\VV$ and $p_1,p_2>\dim\VV/r-1$ such that $X\sim \gamma_{p_1,\ab}$ and $Y\sim \gamma_{p_2,\ab}$.
\end{mainthm}
\begin{proof}

Let $\psi\colon \VV\times\VV\to\DD\times\VV$ be the bijection defined by 
\begin{align*}
\psi(\xx,\yy)=(\ub,\vb)=\left(\PP\left((\xx+\yy)^{-1/2}\right)\xx,\xx+\yy\right).
\end{align*}
We have $(U,V)=\psi(X,Y)$. Inverse mapping $\psi^{-1}$ is defined by 
\begin{align*}
(\xx,\yy)=\psi^{-1}(\ub,\vb)=\left(\PP\left(\vb^{1/2}\right)\ub,\PP\left(\vb^{1/2}\right)(\ee-\ub)\right).
\end{align*}
We are looking for the Jacobian of the map $\psi^{-1}$, that is, the determinant of the linear map
\begin{align*}
\begin{pmatrix}
d\ub\\
d\vb
\end{pmatrix}
\mapsto
\begin{pmatrix}
d\xx \\
d\yy 
\end{pmatrix}
=
\begin{pmatrix}
d\xx/d\ub & d\xx/d\vb \\
d\yy/d\ub & d\yy/d\vb
\end{pmatrix}
\begin{pmatrix}
d\ub\\
d\vb
\end{pmatrix}.
\end{align*}
We have
\begin{align*}
J=\left| 
\begin{array}{cc}
\PP(\vb^{1/2}) & d\xx/d\vb \\
-\PP(\vb^{1/2}) & Id_\VV-d\xx/d\vb
\end{array}
\right| 
=
\left|
\begin{array}{cc}
\PP(\vb^{1/2}) & d\xx/d\vb \\
0 & Id_\VV
\end{array}
\right| = \DDet(\PP(\vb^{1/2})),
\end{align*}
where $\DDet$ denotes the determinant in the space of endomorphisms on $\VV$. From \cite[Proposition III.4.2]{FaKo1994} we get
\begin{align*}
\DDet\left(\PP\left(\vb^{1/2}\right)\right)=(\det(\vb))^{(\dim\VV)/r}.
\end{align*}
Now we can find the joint density of $(U,V)$. Since $(X,Y)$ and $(U,V)$ have independent components, the following identity holds almost everywhere with respect to Lebesgue measure: 
\begin{align*}
(\det(\xx+\yy))^{\dim\VV/r}f_X(\xx)f_Y(\yy)=f_U\left(\PP\left((\xx+\yy)^{-1/2}\right) \xx\right)f_V(\xx+\yy),
\end{align*}
where $f_X$,$f_Y$,$f_U$ and $f_V$ denote densities of $X$, $Y$, $U$ and $V$, respectively. Since the respective densities are assumed to be continuous, the above equation holds for every $\xx,\yy\in\VV$. Taking logarithms of both sides of the above equation we get 
\begin{align}\label{lukacs}
a(\xx)+b(\yy)=c(\xx+\yy)+d\left(\PP\left((\xx+\yy)^{-1/2}\right)\xx\right),
\end{align}
where
\begin{align*}
a(\xx)&=\log\, f_X(\xx),\\
b(\xx)&=\log\, f_Y(\xx),\\
c(\xx)&=\log\, f_V(\xx)-\frac{\dim\VV}{r}\log\det(\xx),\\
d(\ub)&=\log\, f_U(\ub),
\end{align*}
for $\xx\in\VV$ and $\ub\in\DD$.

The conclusion follows now directly from Theorem \ref{T3}. Thus there exist constants $k_1,k_2\in\RR$, $\Lambda\in\En$, $C_i\in\RR$, $i\in\{1,2,3,4\}$ such that 
\begin{align*}
f_X(\xx)&=e^{a(\xx)}=e^{C_1} e^{\scalar{\Lambda,\xx}}(\det(\xx))^{k_1},\\
f_Y(\xx)&=e^{b(\xx)}=e^{C_2} e^{\scalar{\Lambda,\xx}}(\det(\xx))^{k_2},
\end{align*}
for all $\xx\in\VV$. Since $f_X$ and $f_Y$ are densities it follows that $\ab=-\Lambda\in\VV$, $k_i=p_i-(\dim\VV)/r>-1$ and $e^{C_i}=(\det(\ab))^{p_i}/\Gamma_\VV(p_i)$, $i=1,2$.
\end{proof}

\subsection*{Acknowledgement} The author thanks J. Weso{\l}owski for helpful comments and discussions.

\end{document}